\newtheorem{theorem}{Theorem}[section]
\newtheorem{lemma}[theorem]{Lemma}
\newtheorem{corollary}[theorem]{Corollary}
\newtheorem{proposition}[theorem]{Proposition}
\newtheorem{definition}[theorem]{Definition}
\newtheorem{example}[theorem]{Example}
\newtheorem{problem}[theorem]{Problem}
\newproof{proof}{Proof}
\numberwithin{equation}{section}
\newcommand{\e}{\varepsilon}
\newcommand{\w}{\omega}
\newcommand{\IR}{\mathbb{R}}
\newcommand{\cl}{\mathrm{cl}}
\newcommand{\CC}{C_k}
\newcommand{\SM}{{\setminus}}
\begin{document}

\begin{frontmatter}

\title{Ascoli's theorem for  pseudocompact  spaces}

\author{S.~Gabriyelyan}%\tnotetext[label1]{The first named author was partially supported by Israel Science Foundation grant 1/12.}
\ead{saak@math.bgu.ac.il}
\address{Department of Mathematics, Ben-Gurion University of the Negev, Beer-Sheva, P.O. 653, Israel}

\begin{abstract}
A Tychonoff space $X$ is called ({\em sequentially}) {\em Ascoli} if every compact subset (resp. convergent sequence) of $\CC(X)$ is equicontinuous, where $\CC(X)$ denotes the space of all real-valued continuous functions on $X$ endowed with the compact-open topology. The classical Ascoli theorem states that each compact space is Ascoli. We show that a pseudocompact space $X$ is  Asoli iff it is sequentially Ascoli iff  it is selectively  $\w$-bounded.
%We also study the class of selectively  $\w$-bounded spaces and show among others that it is closed under arbitrary products.
%In the main result this note we characterize pseudocompact spaces $X$ which are (sequentially)  Asoli. % the following assertions are equivalent: (i) $X$ is $\mu$-sequentially compact, (ii) the adjoint map $\beta^\ast : C(\beta X)\to \CC(X)$ is compact-covering, where $\beta X$ is the Stone-\v{C}ech compactification of $X$, (iii) $X$ is an  Asoli space, (iv) $X$ is a sequentially  Asoli space.
\end{abstract}

\begin{keyword}
 $\CC(X)$ \sep Ascoli \sep sequentially Ascoli \sep selectively  $\w$-bounded \sep pseudocompact\sep compact-covering map

\MSC[2010]  54A05   \sep    54B05   \sep   54C35   \sep   54D30

\end{keyword}

\end{frontmatter}

%%%%%%%%%%%%%%%%%%%%%%%%%%%
%%%%%%%%%%%%%%%%%%%%%%%%%%%
%%%%%%%%%%%%%%%%%%%%%%%%%%%
%%%%%%%%%%%%%%%%%%%%%%%%%%%

%%%%%%%%%%%%%%%%%%%%%%%%%%%
%%%%%%%%%%%%%%%%%%%%%%%%%%%
%%%%%%%%%%%%%%%%%%%%%%%%%%%
%%%%%%%%%%%%%%%%%%%%%%%%%%%

\section{Introduction}

%%%%%%%%%%%%%%%%%%%%%%%%%%%
%%%%%%%%%%%%%%%%%%%%%%%%%%%
%%%%%%%%%%%%%%%%%%%%%%%%%%%
%%%%%%%%%%%%%%%%%%%%%%%%%%%
%%%%%%%%%%%%%%%%%%%%%%%%%%%
%%%%%%%%%%%%%%%%%%%%%%%%%%%
%%%%%%%%%%%%%%%%%%%%%%%%%%%
%%%%%%%%%%%%%%%%%%%%%%%%%%%

All topological spaces in the article are assumed to be Tychonoff. We denote by $\CC(X)$ the space $C(X)$ of all continuous real-valued functions on a space $X$ endowed with the compact-open topology. One of the basic theorems in Analysis is the Ascoli theorem which states that {\em if $X$ is a $k$-space, then every compact subset of $\CC(X)$ is equicontinuous.}
%\begin{theorem}[Ascoli] \label{t:Ascoli}
%If $X$ is a $k$-space, then every compact subset of $\CC(X)$ is equicontinuous.
%\end{theorem}
For the proof of the Ascoli theorem and various its applications see for example the classical books \cite{Edwards}, \cite{Eng} or \cite{NaB}. %\cite[Theorem 3.4.20]{Eng}, \cite[Theorem~0.4.11]{Edwards} or \cite[Theorem~5.10.4]{NaB}.
The Ascoli theorem motivates us in \cite{BG} to introduce and study the class of Ascoli spaces. A space $X$ is called {\em Ascoli} if every compact subset of $\CC(X)$ is equicontinuous.
In \cite{Noble}, Noble proved that every $k_\IR$-space is Ascoli (recall that a  space $X$ is called a {\em $k_\IR$-space} if a real-valued function $f$ on $X$ is continuous if and only if its restriction $f{\restriction}_K$ to any compact subset $K$ of $X$ is continuous). However, there are Ascoli spaces which are not $k_\IR$, see \cite{BG}.
Being motivated by the classical notion of $c_0$-barrelled locally convex spaces and the fact that in many highly important cases in Analysis only convergent sequences are considered (as in the Lebesgue Dominated Convergence Theorem), we defined in \cite{Gabr:weak-bar-L(X)}  a space $X$ to be {\em sequentially Ascoli} if every convergent sequence in $\CC(X)$ is equicontinuous. Clearly, every Ascoli space is sequentially Ascoli, but the converse is not true in general (every non-discrete $P$-space is sequentially Ascoli but not Ascoli, see \cite{Gabr:weak-bar-L(X)}). Ascoli and sequentially Ascoli spaces in various classes of topological, function and locally convex spaces are thoroughly studied in
\cite{Banakh-Survey,BG,Gabr-LCS-Ascoli,Gab-LF,Gabr-reflex-L(X),Gabr-seq-Ascoli,GGKZ,GGKZ-2,GKP,Noble}. The next diagram shows the relationships between the aforementioned classes of topological spaces
\[
\xymatrix{
\mbox{$k$-space}  \ar@{=>}[r] & \mbox{$k_\IR$-space} \ar@{=>}[r] & \mbox{Ascoli} \ar@{=>}[r] & \mbox{sequentially Ascoli}.}
\]

By the Ascoli theorem every compact space is Ascoli. Although the compact spaces form the most important class of topological spaces, there are other classes of compact-type topological spaces (as sequentially compact or countably compact spaces etc.) which play a considerable role both in Analysis and General Topology, see for example  \cite{Eng}, \cite{GiJ}, \cite{NaB} or the articles \cite{MW,ST}. The most general class of compact-type spaces is the class of pseudocompact spaces. Recall that a space $X$ is called {\em pseudocompact} if every continuous function on $X$ is bounded. So the following question arises naturally: {\em Which pseudocompact spaces $X$ are} ({\em sequentially}) {\em Ascoli}? A partial answer to this question was obtained in \cite{Gabr-seq-Ascoli} where we showed that totally countably compact spaces and near sequentially compact spaces are sequentially Ascoli, however, there are countably compact spaces which are not sequentially Ascoli (for definitions see Section \ref{sec:swb-main}).

Let $X$ be a  pseudocompact space. We denote by $\beta X$ the Stone-\v{C}ech compactification of $X$, and let $\beta: X\to \beta X$ be the canonical embedding. Then the adjoint (or restriction)  map $\beta^\ast: C(\beta X)\to \CC(X)$, $\beta^\ast(f)=f\circ \beta$, is a continuous linear isomorphism from the Banach space $C(\beta X)$ onto $\CC(X)$. One of the most important properties of continuous functions is the property of being compact-covering. A continuous function $f:X\to Y$ between topological space $X$ and $Y$ is called {\em compact-covering} if for every compact subset $K$ of $Y$ there is a compact subset $C$ of $X$ such that $f(C)=K$. It is well known that perfect mappings are compact-covering (\cite[Theorem~3.7.2]{Eng}), and compact-covering functions are important for the study of functions spaces as $\CC(X)$, see \cite{mcoy}. Therefore one can ask: {\em For which pseudocompact spaces $X$ the adjoint map $\beta^\ast: C(\beta X)\to \CC(X)$ is compact-covering}?

%It is somewhat surprising but these two questions have the same answer.

%To answer these questions we introduce below a new class of pseudocompact spaces.
The following class of pseudocompact spaces plays a crucial role to answer both questions.
\begin{definition} {\em
A space $X$ is called {\em selectively  $\w$-bounded } if for any sequence $\{U_n\}_{n\in\w}$ of nonempty open subsets of $X$ there exists a sequence $(x_n)_{n\in\w}\in\prod_{n\in\w}U_n$ containing a subsequence $(x_{n_k})_{k\in\w}$ with compact closure.\qed}
\end{definition}
Our terminology is explained by the possibility to ``select'' special (sub)sequences and the classical notion of $\w$-bounded spaces (recall that a space $X$ is {\em $\w$-bounded} if every sequence in $X$ has compact closure). Clearly, $\w$-bounded (in particular, compact) spaces and sequentially compact spaces are selectively  $\omega$-bounded, and every selectively  $\w$-bounded space is pseudocompact. In Lemma \ref{l:swb-Frolik} below we show that the class of selectively  $\w$-bounded spaces coincides with Frol\'{\i}k's class $\mathfrak{P}^\ast$ introduced in \cite{Frolik-tcc}. In the next section we also compare the class of selectively $\w$-bounded spaces with other important classes of pseudocompact spaces. %In the diagram given in Section \ref{sec:swb-main} we summarize the relationships between the class $\mathfrak{P}^\ast$ and other important classes of pseudocompact spaces.

%Various classes of pseudocompact spaces are intensively studied in literature, see for example \cite{DAS1,JSS,JMW,Vaughan}.

The following theorem proved in the next section is the main result of the paper.%Now we are able to formulate our main and somewhat surprising result.

\begin{theorem} \label{t:seq-Ascoli-mu-seq}
For a pseudocompact space $X$ the following assertions are equivalent:
\begin{enumerate}
\item[{\rm(i)}] $X$ is selectively  $\w$-bounded;
\item[{\rm(ii)}] the adjoint map $\beta^\ast : C(\beta X)\to \CC(X)$ is compact-covering;
\item[{\rm(iii)}] $X$ is an  Asoli space;
\item[{\rm(iv)}] $X$ is a sequentially  Asoli space.
\end{enumerate}
\end{theorem}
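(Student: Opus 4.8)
The plan is to prove the cycle of implications $(i)\Rightarrow(ii)\Rightarrow(iii)\Rightarrow(iv)\Rightarrow(i)$. The implications $(iii)\Rightarrow(iv)$ is immediate from the definitions (every convergent sequence is a compact set, so equicontinuity of compact subsets of $\CC(X)$ forces equicontinuity of convergent sequences). The heart of the argument is $(i)\Rightarrow(ii)$ and the return trip $(iv)\Rightarrow(i)$; the middle step $(ii)\Rightarrow(iii)$ should follow from the classical Ascoli theorem applied to the compact space $\beta X$ together with the fact that continuous images under $\beta^\ast$ of equicontinuous sets are equicontinuous.

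**Step $(i)\Rightarrow(ii)$.** Let $K\subseteq \CC(X)$ be compact; I want a compact $C\subseteq C(\beta X)$ with $\beta^\ast(C)=K$. Since $\beta^\ast$ is a continuous linear bijection of $C(\beta X)$ onto $\CC(X)$, the set $L=(\beta^\ast)^{-1}(K)$ is a bounded subset of the Banach space $C(\beta X)$ that maps onto $K$; the task is to produce a \emph{compact} subset of $C(\beta X)$ still mapping onto $K$. Here is where selective $\w$-boundedness enters: I expect to use it to show that $K$, being compact in the compact-open topology, is in fact \emph{uniformly bounded and equicontinuous as a family of functions on a dense subset}, so that its preimage $L$ is relatively compact in the sup-norm. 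Concretely, if $K$ were not norm-bounded or not ``equicontinuous enough'', one could choose a sequence of points witnessing wild oscillation and feed the associated open sets into the selectively $\w$-bounded property to extract a subsequence with compact closure on which the functions in $K$ misbehave, contradicting compactness of $K$ in $\CC(X)$. Once $L$ is shown to have compact closure $C=\overline{L}$ in $C(\beta X)$, continuity of $\beta^\ast$ gives $\beta^\ast(C)\supseteq \beta^\ast(L)=K$ and compactness of $C$ gives $\beta^\ast(C)$ compact hence closed, while density forces $\beta^\ast(C)=\overline{K}=K$; so $\beta^\ast$ is compact-covering. (One should double-check whether one gets $\beta^\ast(C)=K$ on the nose or only a compact set between $K$ and its closure — since $K$ is already compact this is harmless.)

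**Step $(ii)\Rightarrow(iii)$.** Assume $\beta^\ast$ is compact-covering and let $K\subseteq\CC(X)$ be compact. Choose compact $C\subseteq C(\beta X)$ with $\beta^\ast(C)=K$. By the classical Ascoli theorem (stated in the Introduction) applied to the compact space $\beta X$, the set $C$ is equicontinuous on $\beta X$. Restricting along the embedding $\beta\colon X\hookrightarrow\beta X$, the family $\beta^\ast(C)=K$ is then equicontinuous at every point of $X$, which is exactly the assertion that $X$ is Ascoli. This step should be essentially formal.

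**Step $(iv)\Rightarrow(i)$, the main obstacle.** This is the step I expect to be hardest: I must show that failure of selective $\w$-boundedness produces a convergent sequence in $\CC(X)$ that is \emph{not} equicontinuous. So suppose $X$ is pseudocompact but not selectively $\w$-bounded, and fix a sequence $\{U_n\}_{n\in\w}$ of nonempty open sets witnessing this: \emph{every} choice $(x_n)\in\prod_n U_n$ has \emph{no} subsequence with compact closure. The goal is to build continuous functions $f_n$ on $X$, supported essentially ``near $U_n$'', with $f_n\to 0$ in the compact-open topology but with $\{f_n\}$ not equicontinuous at some point. The convergence $f_n\to 0$ on compacta should come from the fact that no compact set can meet infinitely many of the $U_n$ in an essential way (otherwise one could select points from the $U_n$ inside that compact set and get a subsequence with compact closure — contradicting the witness); the non-equicontinuity should come from arranging each $f_n$ to have a sharp ``spike'' reaching value $1$ on $U_n$. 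Making ``near $U_n$'' and ``spike'' precise using Tychonoff normality (Urysohn functions), and verifying that the resulting sequence genuinely converges to $0$ in $\CC(X)$ rather than merely pointwise, is the delicate part; one likely needs to pass to a subsequence of the $U_n$ and use a shrinking/disjointness argument, and possibly invoke the identification with Frol\'{\i}k's class $\mathfrak{P}^\ast$ from Lemma~\ref{l:swb-Frolik} to get a cleaner combinatorial handle on the $U_n$. Once such a sequence is constructed, $X$ is not sequentially Ascoli, completing the contrapositive and closing the cycle.
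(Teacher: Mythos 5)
Your cycle of implications is the same as the paper's, and steps (ii)$\Rightarrow$(iii) and (iii)$\Rightarrow$(iv) are correct as written (the paper packages (ii)$\Rightarrow$(iii) as a small proposition on compact-covering adjoint maps applied to $Y=\beta X$, but the content is identical). The two substantive steps, however, each have a genuine gap. In (i)$\Rightarrow$(ii) you never specify the mechanism by which ``wild oscillation'' contradicts compactness of $K$, and the obvious attempts fail. The paper argues: if $C=(\beta^\ast)^{-1}(K)$ is not compact then, being closed in the Banach space $C(\beta X)$, it is not precompact, so it contains a sequence $\{f_n\}$ with $\|f_n-f_m\|_\infty>\e$ for all $n\ne m$. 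The missing idea is how to extract from this a \emph{convergent} sequence in $\CC(X)$: compactness of $K$ does not give sequential compactness. The paper uses pseudocompactness of $X$ to conclude that $K$ is an Eberlein compact in $C_p(X)$, hence Fr\'echet--Urysohn, so after passing to a subsequence one may assume $f_n\to\mathbf{0}$ in $\CC(X)$ while the sequence stays $\e$-separated. Only then do the open sets $U_n=\{x\in X:|f_n(x)-f_{n+1}(x)|>\e\}$ and selective $\w$-boundedness produce a compact $S$ on which the entire tail of the sequence is uniformly small, contradicting the separation at the selected points $x_{n_k}\in U_{n_k}\cap S$. Without the Fr\'echet--Urysohn step you cannot guarantee that the two functions whose separation is witnessed at a point of $S$ both become small there, and your sketch gives no substitute.

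In (iv)$\Rightarrow$(i) your construction of the spike functions $f_n$ (Urysohn functions with $f_n(x_n)=1$ and $f_n=0$ off $U_n$; complete regularity suffices, no normality needed) and your proof that $f_n\to\mathbf{0}$ in $\CC(X)$ because a compact set can meet only finitely many $U_n$ are exactly the paper's argument --- that is not the delicate part. What is missing is the source of the point at which equicontinuity fails: you assert that the spikes are ``not equicontinuous at some point'' but give no reason such a point exists. The paper gets it from pseudocompactness a second time: the family $V_n=\{x\in X: f_n(x)>\tfrac12\}$ cannot be locally finite in a pseudocompact space, so there is a point $z$ every neighborhood of which meets infinitely many $V_n$; the evaluation map on $S\times X$ (where $S=\{f_n\}_{n\in\w}\cup\{\mathbf{0}\}$) then fails to be continuous at $(\mathbf{0},z)$, and since equicontinuity of a convergent sequence implies continuity of its evaluation map, $S$ is not equicontinuous. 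Without this step the argument does not close: on a non-pseudocompact space such a family of disjointly supported spikes can perfectly well be equicontinuous, so some use of pseudocompactness at exactly this point is unavoidable.
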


It is worth mentioning that Kato constructed in \cite{Kato} a space $X$ in the class $\mathfrak{P}^\ast$ which is not a $k_\IR$-space. This example, Lemma \ref{l:swb-Frolik} and Theorem \ref{t:seq-Ascoli-mu-seq} show that there is even a pseudocompact Ascoli space which is not a $k_\IR$-space.

%We prove Theorem \ref{t:seq-Ascoli-mu-seq} in Section \ref{sec:swb-main}. In Section \ref{sec:swb-basic} we select basic properties of selectively $\w$-bounded spaces. In particular, we show that the class of  selectively $\w$-bounded spaces is closed under arbitrary products (Proposition \ref{p:swb-sigma-prod}) and compare this new class with other important classes of pseudocompact spaces.

%%%%%%%%%%%%%%%%%%%%%%%%%%%%%%%%%%
%%%%%%%%%%%%%%%%%%%%%%%%%%%%%%%%%%
%%%%%%%%%%%%%%%%%%%%%%%%%%%%%%%%%%

\section{Proof of Theorem \ref{t:seq-Ascoli-mu-seq}} \label{sec:swb-main}

%%%%%%%%%%%%%%%%%%%%%%%%%%%%%%%%%%
%%%%%%%%%%%%%%%%%%%%%%%%%%%%%%%%%%
%%%%%%%%%%%%%%%%%%%%%%%%%%%%%%%%%%

In \cite{Frolik-tcc}, Frol\'{\i}k defined the class $\mathfrak{P}^\ast$ consisting of spaces with the property: each infinite collection of {\em disjoint} open sets has an infinite subcollection each of which meets some fixed compact set. Below we show that the class $\mathfrak{P}^\ast$  coincides with the class of all selectively  $\w$-bounded spaces.

\begin{lemma} \label{l:swb-Frolik}
A space $X$ belongs to $\mathfrak{P}^\ast$ if and only if it is selectively  $\w$-bounded.
\end{lemma}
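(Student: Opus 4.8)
The plan is to prove both implications directly from the definitions, with the forward direction ($\mathfrak{P}^\ast \Rightarrow$ selectively $\w$-bounded) being the one that needs a small trick and the reverse being essentially immediate. For the reverse direction, suppose $X$ is selectively $\w$-bounded and let $\{V_n\}_{n\in\w}$ be an infinite collection of pairwise disjoint nonempty open sets. Applying the definition of selective $\w$-boundedness to this sequence, I obtain a sequence $(x_n)_{n\in\w}$ with $x_n\in V_n$ possessing a subsequence $(x_{n_k})_{k\in\w}$ with compact closure $C:=\cl\{x_{n_k}:k\in\w\}$. Then $C$ is a fixed compact set meeting each of the infinitely many open sets $V_{n_k}$ (since $x_{n_k}\in V_{n_k}\cap C$), which is exactly the defining property of $\mathfrak{P}^\ast$. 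So $X\in\mathfrak{P}^\ast$.

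For the forward direction, suppose $X\in\mathfrak{P}^\ast$ and let $\{U_n\}_{n\in\w}$ be an arbitrary sequence of nonempty open sets; these need not be disjoint, so I first need to extract a disjoint subfamily. Pick $x_0\in U_0$; since $X$ is Tychonoff, choose an open $W_0\subseteq U_0$ with $x_0\in W_0$ and $\cl W_0$ ``small'' is not needed — what I actually want is to pass to a subsequence of the $U_n$ that can be shrunk to disjoint open sets. The clean way: recursively build an increasing sequence $n_0<n_1<\cdots$ and points $x_k\in U_{n_k}$ together with pairwise disjoint open sets $W_k$ with $x_k\in W_k\subseteq U_{n_k}$. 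The subtlety is that a naive recursion can get stuck if, at some stage, every later $U_m$ is contained in $\bigcup_{j<k}\cl W_j$; but in that case one of the finitely many $W_j$'s already contains infinitely many of the remaining $U_m$'s in its closure, and I can instead work inside that single open set. More carefully, I would argue: either one can extract an infinite disjoint (after shrinking) subfamily, or else the sequence ``collapses'' and infinitely many $U_n$ accumulate around a single point, in which case the constant (or eventually constant) sequence already has compact closure and we are done trivially. Assuming the generic case, I have pairwise disjoint open $W_k$ with $x_k\in W_k\subseteq U_{n_k}$. Apply the $\mathfrak{P}^\ast$ property to $\{W_k\}_{k\in\w}$: there is a compact set $K$ and an infinite set $A\subseteq\w$ with $W_k\cap K\neq\emptyset$ for all $k\in A$. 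Pick $y_k\in W_k\cap K$ for $k\in A$; then $\{y_k:k\in A\}\subseteq K$, so its closure is compact. Finally I define the required sequence $(z_n)_{n\in\w}\in\prod_n U_n$ by putting $z_{n_k}:=y_k$ for $k\in A$ and choosing $z_n$ arbitrarily in $U_n$ otherwise; the subsequence indexed by $\{n_k:k\in A\}$ lies in $K$ and hence has compact closure, verifying selective $\w$-boundedness.

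The main obstacle is the disjointification step in the forward direction — handling the degenerate case where the $U_n$ cannot be thinned to an infinite disjoint family. I expect this is resolved exactly as sketched: if no infinite disjoint subfamily (even after shrinking) exists, a pigeonhole argument forces infinitely many of the $U_n$ to meet a common point's neighborhood base, yielding a convergent-type sequence with compact closure outright. One should double-check that ``shrinking to disjoint open sets'' is legitimate using only the Hausdorff (indeed Tychonoff) hypothesis, and that Frol\'{\i}k's formulation — ``each infinite collection of disjoint open sets has an infinite subcollection each meeting some fixed compact set'' — is being applied to a genuinely infinite collection, which the construction guarantees.
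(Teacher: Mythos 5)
Your overall strategy is the same as the paper's: the reverse implication is immediate (and your argument for it is correct), while the forward implication reduces to thinning $\{U_n\}_{n\in\w}$ to an infinite subfamily that can be shrunk to pairwise disjoint nonempty open sets, applying Frol\'{\i}k's property to that family, and filling in the remaining coordinates arbitrarily; your final assembly of the sequence $(z_n)$ is exactly right. The genuine gap is in the degenerate case where disjointification fails, which you yourself flag as the main obstacle but do not actually resolve. Your proposed fallback --- ``infinitely many of the $U_n$ meet a common point's neighborhood base, yielding a convergent-type sequence with compact closure outright'' --- is both too weak and unjustified: if the $U_n$ merely meet every neighborhood of some point $z$, you cannot in general select $t_n\in U_n$ with compact closure (without first countability at $z$ there need be no convergent subsequence, and points chosen in $U_n\cap W$ for shrinking $W$ can still fail to have compact closure). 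What you need, and what is actually true, is the stronger dichotomy: either one can choose pairwise \emph{distinct} points $z_n\in U_n$ for infinitely many $n$ --- and then regularity lets you pass to a further subsequence separated by pairwise disjoint open $V_k$ with $z_{n_k}\in V_k\subseteq U_{n_k}$ --- or a greedy injective selection terminates, which forces all but finitely many $U_n$ to lie inside one fixed finite set; pigeonhole then puts a single point $x$ in infinitely many $U_n$, and the genuinely constant subsequence $(x,x,\dots)$ has compact (singleton) closure. Your ``work inside that single open set'' recursion does not visibly produce either alternative.

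The paper reaches this dichotomy by a cleaner split that you may wish to adopt: it distinguishes whether infinitely many of the $U_n$ are infinite \emph{as sets}. If only finitely many are, one may assume every $U_n$ is an open singleton (a finite open set in a $T_1$ space is a union of open singletons), and then either the range is finite (compact closure for free) or the distinct singletons already form a pairwise disjoint open family to which $\mathfrak{P}^\ast$ applies directly. If infinitely many $U_n$ are infinite, pairwise distinct $z_n\in U_n$ are chosen by induction and then separated. Your parenthetical worry about which separation axiom legitimizes the shrinking is well placed: the clean inductive separation of infinitely many distinct points (after passing to a subsequence) uses regularity, which Tychonoff spaces provide, rather than just the Hausdorff axiom.
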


\begin{proof}
It is clear that every selectively  $\w$-bounded space belongs to $\mathfrak{P}^\ast$. Conversely, assume that $X$ belongs to $\mathfrak{P}^\ast$. Fix a sequence $\{U_n\}_{n\in\w}$ of nonempty open subsets of $X$. We have to show that there exists a sequence $(x_n)_{n\in\w}\in\prod_{n\in\w}U_n$ containing a subsequence $(x_{n_k})_{k\in\w}$ with compact closure. Set $A:=\{n\in\w: U_n \mbox{ is infinite}\}$ and consider two cases.
\smallskip

{\em Case 1. The family $A$ is finite.} Then, without loss of generality, we can assume that $U_n=\{x_n\}$ for every $n\in\w$. Put $S:=\{x_n\}_{n\in\w}$. If $S$ is finite, then the sequence $(x_n)\in \prod_{n\in\w} U_n$ has compact closure. If $S$ is infinite, take a subsequence $\{x_{n_k}\}_{k\in\w}$ of $S$ consisting of pairwise distinct points. Since $X\in \mathfrak{P}^\ast$ there is a compact set $K$ such that the set $J:=\{ j\in\w: U_{n_j}\cap K\not=\emptyset\}$ is infinite. Then the subsequence $(x_{n_j})_{j\in J}$ of $(x_n)_{n\in\w}$ has compact closure.
\smallskip

{\em Case 2. The family $A$ is infinite.} Then, passing to a subsequence if needed, we can assume that all $U_n$ are infinite. By induction on $n\in\w$, we can choose pairwise distinct points $z_n$ such that $z_n\in U_n$ for every $n\in\w$. Once again by induction, one can choose a subsequence $\{z_{n_k}\}_{k\in\w}$ of $\{z_n\}_{n\in\w}$ and a sequence $\{V_k\}_{k\in\w}$ of open sets in $X$ such that $z_{n_k}\in V_k\subseteq U_{n_k}$ and $V_k\cap V_j=\emptyset$ for all distinct $k,j\in\w$. Since $X\in \mathfrak{P}^\ast$ and all $V_k$ are pairwise disjoint, there is a compact set $K$ such that the set $J:=\{k\in\w: K\cap V_k\not= \emptyset\}$ is infinite. For every $j\in J$, choose a point $x_{n_j}\in K\cap V_j$ and, for every $n\not\in \{n_j:j\in J\}$, let $x_n := z_n$. It is clear that $(x_n)_{n\in\w}\in\prod_{n\in\w}U_n$ and its subsequence $(x_{n_j})_{j\in J}$ has compact closure witnessing the property of being a selectively $\w$-bounded space.\qed
\end{proof}

Now we compare the class of selectively $\w$-bounded spaces with other important classes of  pseudocompact spaces. We recall that a space $X$ is called
\begin{enumerate}
\item[$\bullet$] {\em sequentially compact} if every sequence in $X$ has a convergent subsequence;
\item[$\bullet$] {\em totally countably compact} if every sequence in $X$ has a subsequence with compact closure;
\item[$\bullet$]  {\em near sequentially compact } if for any sequence $\{U_n\}_{n\in\w}$ of open subsets of $X$ there exists a sequence $(x_n)_{n\in\w}\in\prod_{n\in\w}U_n$ containing a convergent subsequence $(x_{n_k})_{k\in\w}$;
\item[$\bullet$] {\em countably compact} if every sequence in $X$ has a cluster point.
\end{enumerate}
Near sequentially compact spaces were introduced and  studied by Dorantes-Aldama and Shakhmatov \cite{DAS1}, who called them selectively sequentially pseudocompact spaces. Later those spaces were applied in  \cite{BG-JNP} to the study of the Josefson--Nissenzweig property in the realm of locally convex spaces.
Totally countably compact spaces, introduced by Frol\'{\i}k  \cite{Frolik-tcc}, were intensively studied by Vaughan in \cite{Vaughan}.
Evidently, totally countably compact spaces and  near sequentially compact spaces are selectively  $\w$-bounded.

By Example~2.6 of \cite{DAS1}, the  Mr\'{o}wka--Isbell space associated with a maximal almost disjoint family $\mathcal{A}$ on the discrete space $\w$ is near sequentially compact. This example and Examples~2.11 and 2.14 from \cite{Vaughan} show that none of the implications in the following diagram  is in general reversible
\[
\xymatrix{
\mbox{$\w$-bounded}  \ar@{=>}[r] & {\substack{\mbox{totally countably} \\ \mbox{compact}}}  \ar@{=>}[r] \ar@{=>}[rd] & {\substack{\mbox{countably} \\ \mbox{compact}}}  \ar@{=>}[r] & \mbox{pseudocompact}\\
{\substack{\mbox{sequentially} \\ \mbox{compact}}}  \ar@{=>}[r]  \ar@{=>}[ru] &  {\substack{\mbox{near sequentially} \\ \mbox{compact}}} \ar@{=>}[r]  & {\substack{\mbox{selectively} \\ \mbox{$\w$-bounded}}} \ar@{=>}[ru] &
}
\]
Moreover, in \cite{BG-pseudo} we constructed an example of a selectively $\w$-bounded space $X$ which is countably compact but not totally countably compact.

 Let $X$ be a (Tychonoff) space. Then the  sets of the form
\[
[K;\e]:=\{ f\in C(X): |f(x)|<\e \; \mbox{ for all }\; x\in K\}, %\; \mbox{ where $K\subseteq X$ is compact and $\e>0$},
\]
where $K\subseteq X$ is compact and $\e>0$, form a base of the compact-open topology on $C(X)$. The space $C(X)$ endowed with the pointwise topology is denoted by $C_p(X)$.

Recall that a continuous function $f:X\to Y$ between topological spaces $X$ and $Y$ is called {\em sequence-covering} if for every convergent sequence $S$  in $Y$ (with the limit point) there is a convergent sequence $C\subseteq X$ such that $f(C)=K$.

\begin{proposition} \label{p:mapping-Ascoli}
Let $X$ be a subspace of a space $Y$ such that the adjoint map $i^\ast :\CC(Y)\to \CC(X)$ of the identical embedding $i:X\hookrightarrow Y$ is surjective. If $i^\ast$ is compact (sequence) covering and $Y$ is a (resp. sequentially) Ascoli space, then so is $X$.
\end{proposition}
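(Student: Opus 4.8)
The plan is to transfer equicontinuity along the adjoint map $i^\ast$. Suppose $Q \subseteq \CC(X)$ is a compact subset (in the sequentially Ascoli case, $Q$ is a convergent sequence together with its limit). Since $i^\ast : \CC(Y) \to \CC(X)$ is surjective and compact-covering (resp. sequence-covering), there is a compact subset $P \subseteq \CC(Y)$ (resp. a convergent sequence $P \subseteq \CC(Y)$) with $i^\ast(P) = Q$. Because $Y$ is Ascoli (resp. sequentially Ascoli), $P$ is equicontinuous at every point of $Y$; in particular $P$ is equicontinuous at every point $x \in X \subseteq Y$. I then want to conclude that $Q = i^\ast(P)$ is equicontinuous at every point of $X$. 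This is immediate from the observation that $i^\ast$ is just restriction: for $g \in P$, the function $i^\ast(g) = g{\restriction}_X$ has the same local behaviour at points of $X$ as $g$ does.

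More precisely, I would argue as follows. Fix $x_0 \in X$ and $\e > 0$. Equicontinuity of $P$ at $x_0$ (viewed as a point of $Y$) gives an open neighbourhood $U$ of $x_0$ in $Y$ such that $|g(x) - g(x_0)| < \e$ for all $x \in U$ and all $g \in P$. Then $U \cap X$ is an open neighbourhood of $x_0$ in $X$, and for every $h \in Q$ we may write $h = g{\restriction}_X$ for some $g \in P$, whence $|h(x) - h(x_0)| = |g(x) - g(x_0)| < \e$ for all $x \in U \cap X$. Thus $Q$ is equicontinuous at $x_0$, and since $x_0 \in X$ was arbitrary, $Q$ is equicontinuous. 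This proves $X$ is Ascoli (resp. sequentially Ascoli).

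I do not expect a serious obstacle here; the statement is essentially a formal consequence of the definitions once one unwinds what $i^\ast$ and the covering hypotheses say. The only point requiring a little care is making sure the preimage $P$ of the compact set $Q$ is genuinely compact (resp. a genuine convergent sequence) in $\CC(Y)$ — this is exactly the content of the hypothesis that $i^\ast$ is compact-covering (resp. sequence-covering), so there is nothing extra to prove. One should also note that in the sequentially Ascoli case the convergent sequence $C \subseteq \CC(Y)$ produced by sequence-covering maps onto $Q$ together with its limit point (the definition of sequence-covering includes the limit point), so $C$ is exactly the kind of set for which the sequentially Ascoli property of $Y$ applies. With that, the argument of the previous paragraph goes through verbatim in both cases.
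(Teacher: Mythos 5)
Your proposal is correct and is essentially identical to the paper's own proof: both select a compact (resp.\ convergent-sequence) preimage under the covering hypothesis, apply the (sequentially) Ascoli property of $Y$ at the point $x_0$, and restrict the resulting neighbourhood to $X$. No issues.
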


\begin{proof}
Let $K$ be a compact subset (or a convergent sequence) in $\CC(X)$. We have to show that $K$ is equicontinuous. Fix a point $x_0\in X$ and $\e>0$. Choose a compact subset (or a convergent sequence) $C$  in $\CC(Y)$ such that $i^\ast(C)=K$. Since $Y$ is  (sequentially) Ascoli, there is an open neighborhood $U$ of $x_0$ in $Y$ such that
\begin{equation} \label{equ:mapping-Ascoli-1}
|g(y)-g(x_0)|<\e \;\mbox{ for all }\; y\in U \mbox{ and } g\in C.
\end{equation}
For every $x\in U\cap X$ and each $f\in K$, take $g\in C$ such that $f=g\circ i$ and then (\ref{equ:mapping-Ascoli-1}) implies
$
|f(x)-f(x_0)|=\big|g\big(i(x)\big)-g\big(i(x_0)\big)\big| <\e.
$
Thus $K$ is equicontinuous.\qed
\end{proof}

Now we are able to prove our main result.
\smallskip

{\em Proof of Theorem \ref{t:seq-Ascoli-mu-seq}}.  %\begin{proof}
(i)$\Rightarrow$(ii) Assume that $X$ is selectively  $\w$-bounded, and let $K$ be a compact subset of $\CC(X)$. We have to show that the closed subset $C:=(\beta^\ast)^{-1}(K)$ of the Banach space $C(\beta X)$ is compact. Suppose for a contradiction that $C$ is not compact. Since $C(\beta X)$ is complete and $C$ is closed, it follows that $C$ is not precompact in $C(\beta X)$. Therefore, by \cite[Theorem~5]{BGP}, there exist a sequence $\{ f_n\}_{n\in\w}\subseteq C$ and $\e>0$ such that
\begin{equation} \label{equ:mapping-Ascoli-2}
\| f_n -f_m\|_{\infty} >\e \quad \mbox{ for all distinct }\; n,m\in\w,
\end{equation}
where $\|f\|_\infty$ denotes the sup-norm of $f\in C(\beta X)$.

It is clear that $K$ is compact also in the space $C_p(X)$. Since $X$ is pseudocompact, Theorem~III.4.22 of \cite{Arhangel} implies that $K$ is an Eberlein compact, and hence $K$ is Fr\'{e}chet--Urysohn by \cite[Theorem~III.3.6]{Arhangel}. Therefore, passing to a subsequence if needed, we can assume that the sequence $\{ f_n\}_{n\in\w}$ converges  in $\CC(X)$ to some function $g\in K$. Replacing $K$ by $K-g$, we can also suppose that $g=\mathbf{0}$ is the zero function.

Since $X$ is dense in $\beta X$, (\ref{equ:mapping-Ascoli-2}) implies that for every $n\in \w$, the open set
\[
U_n :=\{ x\in X: |f_n(x) -f_{n+1}(x)|>\e\}
\]
is not empty. As $X$ is selectively  $\w$-bounded, there exists a sequence $(x_n)_{n\in\w}\in\prod_{n\in\w}U_n$ containing a subsequence $(x_{n_k})_{k\in\w}$ whose closure $S:=\overline{\{x_{n_k}: k\in\w\}}$ is a compact subset of $X$.

Now, since $f_n \to \mathbf{0}$ in $\CC(X)$, there is an $m\in\w$ such that $f_n\in \big[S;\tfrac{\e}{3}\big]$ for all $n\geq m$. In particular, we have
\begin{equation} \label{equ:mapping-Ascoli-3}
\big| f_{n_k}(x_{n_k}) -f_{n_k+1}(x_{n_k})\big| \leq \big| f_{n_k}(x_{n_k})\big|+\big| f_{n_k+1}(x_{n_k})\big| < \tfrac{2\e}{3}
\end{equation}
for all sufficiently large $k\in \w$. But since $x_{n_k}\in U_{n_k}$ for all $k\in\w$, (\ref{equ:mapping-Ascoli-3}) contradicts to the choice of the open sets $U_n$. This contradiction shows that $C$ is compact in $C(\beta X)$, and hence the map $\beta^\ast$ is compact-covering.
\smallskip

The implication (ii)$\Rightarrow$(iii) follows from  Proposition \ref{p:mapping-Ascoli} applied to $X$ and $Y=\beta X$, and the implication (iii)$\Rightarrow$(iv) is trivial.
\smallskip

(iv)$\Rightarrow$(i) Assume that $X$ is a sequentially  Asoli space. We have to show that $X$ is  selectively  $\w$-bounded. Suppose for a contradiction that $X$ is not a selectively  $\w$-bounded space. Then there exists a sequence $\{U_n\}_{n\in\w}$ of nonempty open subsets of $X$ such that for every sequence $(z_n)_{n\in\w}\in\prod_{n\in\w}U_n$ there is no subsequence $(z_{n_k})_{k\in\w}$ whose closure is compact. %Since $X$ is Tychonoff we can assume that the sequence $(U_n)_{n\in\w}$ is pairwise disjoint.

For every $n\in\w$, choose a point $x_n\in U_n$ and a continuous function $f_n:X\to [0,1]$ such that $f_n(x_n)=1$ and $f_n(X\SM U_n)\subseteq \{0\}$. We claim that $f_n\to \mathbf{0}$ in $\CC(X)$. Indeed, fix a compact subset $K$ of $X$ and $\e>0$. Then the choice of the sequence  $\{U_n\}_{n\in\w}$  implies that the set $A:=\{n\in\w: U_n\cap K\not=\emptyset\}$ is finite (indeed, otherwise, we could choose a point $z_n\in U_n\cap K$ for every $n\in A$ and an arbitrary point $z_n\in U_n$ for every $n\in\w\SM A$, and then the closure of the subsequence $\{z_n : n\in A\}$ of $\{z_n\}_{n\in\w}$ would be compact that contradicts the choice of the sequence $\{U_n\}_{n\in\w}$). This means that $f_n\in [K;\e]$ for every $n\in \w\SM A$. Thus $f_n\to \mathbf{0}$. Set $S:=\{f_n\}_{n\in\w} \cup\{ \mathbf{0}\}$, so $S$ is a convergent sequence in $\CC(X)$.

For every $n\in\w$, set $V_n:=\{ x\in X: f_n(x)>\tfrac{1}{2}\}$; so $x_n\in V_n \subseteq U_n$. Since $X$ is pseudocompact, the family $\{V_n\}_{n\in\w}$ is not locally finite (see \cite[Theorem~3.10.22]{Eng}), and therefore there is a point $z\in X$ such that for every neighborhood $W$ of $z$, the set $\{n\in\w: V_n\cap W\not=\emptyset\}$ is infinite. %Observe that $z\not\in V_n$ for every $n\in\w$ because the sequence $(U_n)_{n\in\w}$ is pairwise disjoint.

Finally, to get a desired contradiction we show that the sequence $S$ is not equicontinuous. Since $X$ is sequentially Ascoli, Theorem 2.7 of \cite{Gabr-seq-Ascoli} states that $S$ is  equicontinuous if and only if $S$ is  evenly continuous, i.e. the evaluation map $\psi:S\times X\to \IR$, $\psi(f,x):=f(x)$, is  continuous (see also Lemma 2.1 of \cite{Gabr-LCS-Ascoli}). Therefore it is sufficient to show that the map $\psi$  is {\em not} continuous  at the point $(\mathbf{0},z)$. To this end, fix a $k\in\w$ and an open neighborhood $W$ of the point $z$. Since  the set $\{n\in\w: V_n\cap W\not=\emptyset\}$ is infinite,  there is an $m>k$ such that $V_m \cap W$ contains some point $t_m$. By the definition of $V_m$ we obtain $|\psi(f_m,t_m)-\psi(\mathbf{0},z)|=f_m(t_m)> \tfrac{1}{2}$. Thus $\psi$ is not continuous  at $(\mathbf{0},z)$.\qed
%\end{proof}
\smallskip

It immediately follows from Theorem \ref{t:seq-Ascoli-mu-seq} that if $X$ is a selectively  $\w$-bounded space, then every compact subset of $\CC(X)$ is metrizable. However, the converse is not true in general as the following example shows.

\begin{example} \label{exa:swb-non-swb} 
There is a countably compact non-selectively  $\w$-bounded space $X$ such that all compact subsets of $\CC(X)$ (even of $C_p(X)$) are metrizable. 
\end{example}

\begin{proof}
In \cite{Terasaka}, Terasaka constructed a separable countably compact space $X$ whose square $X\times X$ is not pseudocompact. By Theorem 3.5 of \cite{Frolik-tcc}, the product of a selectively $\w$-bounded space and a pseudocompact space is pseudocompact. Therefore the space $X$ is not selectively  $\w$-bounded. % by Proposition \ref{p:swb-sigma-prod} proved below.
Let $D$ be a countable dense subspace of $X$. Then the restriction map $C_p(X)\to C_p(D)$ is continuous and injective. Since $C_p(D)$ is a metric space, it follows that all compact subsets of $C_p(X)$ and hence of $\CC(X)$ are metrizable.\qed
\end{proof}

%%%%%%%%%%%%%%%%%%%%%%%%%%%
%%%%%%%%%%%%%%%%%%%%%%%%%%%
%%%%%%%%%%%%%%%%%%%%%%%%%%%
%%%%%%%%%%%%%%%%%%%%%%%%%%%

\medskip
{\bf Acknowledge:} The author thanks Taras Banakh for useful discussion on the name ``selectively  $\w$-bounded''.
%The author is grateful to the referee for careful reading of the paper and very useful remarks and suggestions.

\medskip

%%%%%%%%%%%%%%%%%%%%%%%%%%%
%%%%%%%%%%%%%%%%%%%%%%%%%%%
%%%%%%%%%%%%%%%%%%%%%%%%%%%
%%%%%%%%%%%%%%%%%%%%%%%%%%%

\bibliographystyle{amsplain}

\end{document}